\newtheorem{definition}{Definition}
\newtheorem{remark}{Remark}
\newcommand\real{\ensuremath{{\mathbb R}}}
\newenvironment{proofof}{\noindent {\em Proof of }}{\hfill \hspace*{1pt}
\hfill $\blacksquare$}
\newtheorem{lem}{Lemma} 
\newenvironment{lemma}{\begin{lem}\rm }{\hfill \hspace*{1pt} \hfill $\diamond$\end{lem}}
\newtheorem{prop}{Proposition}
\newenvironment{proposition}{\begin{prop} \rm }{\hfill \hspace*{1pt} \hfill $\diamond$\end{prop}}
\newcommand{\calR}{\mathcal{R}}
\newcommand{\calK}{\mathcal{K}}
\newcommand{\calX}{\mathcal{X}}
\title{\LARGE \bf An operator-theoretic approach to differential positivity
}
\author{A. Mauroy,
\thanks{A. Mauroy is with the Department of Electrical Engineering and Computer Science,
        University of Liège, 4000 Liège, Belgium,
        {\tt\small a.mauroy@ulg.ac.be}}
				F. Forni, and R. Sepulchre%
\thanks{F. Forni and R. Sepulchre are with the University of Cambridge, Department of Engineering, Trumpington Street, Cambridge CB2 1PZ, and with the Department of Electrical Engineering and Computer Science, University of Liège, 4000 Liège, Belgium, {\tt\small ff286@cam.ac.uk, r.sepulchre@eng.cam.ac.uk}}%
}
\begin{document}

\maketitle
\thispagestyle{empty}
\pagestyle{empty}

\begin{abstract}

Differentially positive systems are systems 
whose linearization along trajectories is positive. 
Under mild assumptions, their solutions asymptotically converge to a one-dimensional attractor, which must be a limit cycle in the absence of fixed points in the limit set. In this paper, we investigate the general connections between the (geometric) properties of differentially positive systems and the (spectral) properties of the Koopman operator. In particular, we obtain converse results for differential positivity, showing for instance that any hyperbolic limit cycle is differentially positive in its basin of attraction. We also provide the construction of a contracting cone field.

\end{abstract}

\section{Introduction}
{\color{black}

A linear system is positive 
if its trajectories leave
some conic subset $\calK$ of the state space 
invariant \cite{Bushell}. Positivity is
at the core of a number of applications
because it strongly restricts 
the linear behavior \cite{Farina2000,Moreau2004,Roszak2009}. 
Under mild conditions,
Perron-Frobenius theory guarantees that 
every bounded trajectory converges asymptotically
to a one-dimensional attractor given by the ray
$\lambda \mathbf{w}\!\!\in\!\! \calK$, 
where $\lambda \!\!\in\!\! \real$ and $\mathbf{w}$ is the
dominant eigenvector of the system state matrix
\cite{Birkhoff1957,Bushell}.

Differential positivity 
brings linear positivity to the nonlinear setting.
A nonlinear system is differentially positive if 
its \emph{linearization} 
along any trajectory 
leaves some cone (field) invariant \cite{Forni_diff_pos}.
Differentially positive systems are a sizable class of
systems, which includes monotone systems 
\cite{Angeli2003,Hirsch1995,Smith1995}.
Differential positivity also
restricts the asymptotic behavior of a nonlinear system.
Under mild conditions, a suitable differential 
formulation of Perron-Frobenius theory 
guarantees that the trajectories of the nonlinear
systems converge asymptotically to a one-dimensional attractor. In contrast to linear positivity,
this attractor is not necessarily a ray but a curve,
possibly given by a collection of fixed points
and connecting arcs or by a limit cycle \cite{Forni_diff_pos,Forni2015}.

Differential positivity is a promising tool
for the study of bistable and periodic behaviors,
since it reduces the analysis of those behaviors to
the characterization of a suitable cone field 
on the system state manifold. However,
besides specific families of monotone systems,
there is no constructive methodology to find
such cone fields. At more fundamental level,
it is not even clear how demanding is to 
use differential positivity for capturing bistable and
periodic behaviors. This paper provides a first
answer to both these questions. 

Bridging 
the (geometric) properties of differentially positive systems and the (spectral) properties of the Koopman operator \cite{Mezic}, 
the paper illustrates the tight connection between the
existence of a suitable collection of Koopman eigenfunctions
for the system and the construction of a cone field.
This approach leads directly to a 
numerical tool for constructing cone fields.
At more fundamental level, Koopman theory provides a way to 
derive converse results for differential positivity.
The striking outcome is that any system with 
a hyperbolic limit cycle is differentially positive
in the basin of attraction of the limit cycle.

A short introduction to both differential positivity and
Koopman operator theory is provided in the next section,
which follows a brief discussion on the basic geometric tools 
used in the paper.
The connection between Koopman operator theory 
and differential positivity is developed
in Section \ref{sec_gen_results}}. We
provide an explicit construction of the cone field
based on a suitable set of Koopman eigenfunctions, 
and we show the precise relation between 
the so-called Perron-Frobenius vector field and 
the dominant Koopman eigenfunction.
Section \ref{sec_converse} is dedicated to converse results
for hyperbolic fixed points and hyperbolic limit cycles.
Section \ref{sec_numerical} 
provides a cone field
for a system with a stable equilibrium and for the Van der Pol oscillator,
by exploiting numerical methods for 
computing Koopman eigenfunctions
based on Laplace averages. 
Conclusions follow. Proofs are in appendix.

{\color{black} 
\section{A glimpse into differential positivity and Koopman operator}

\subsection{Manifolds and prolonged dynamics}

The exposition of the paper takes advantage of a few basic geometric notions on Riemannian manifolds.
Let $\mathcal{X}$ be a smooth $n$-dimensional manifold endowed with 
a Riemannian metric $\langle \cdot,\cdot \rangle_x:T_x\mathcal{X} \times T_x\mathcal{X} \to \mathbb{R}$
where $T_x\mathcal{X}$ denotes the tangent space at $x \in \mathcal{X}$. 
We will use $|\delta x|$ to denote $\sqrt{\langle \delta x, \delta x \rangle_x}$ for all $\delta x\in T_x \mathcal{X}$. 
Given two smooth manifolds $\mathcal{X}_1$ and $\mathcal{X}_2$, and a differentiable function (or observable)
$g: \mathcal{X}_1 \to \mathcal{X}_2$, 
let $\partial g(x): T_x\mathcal{X}_1 \to T_{g(x)}\mathcal{X}_2$,
$\delta x \mapsto \partial g(x)[\delta x]$, be the differential of $g$ at $x$.
When clear from the context, we will simply write $\partial g(x)\delta x \triangleq \partial g(x) [\delta x]$,
and we will use $\partial g(x) A := \{\partial g(x) \delta x | \delta x \in A\}$ for all $A \subseteq T_x\mathcal{X}$.

The paper focuses on continuous-time dynamical systems $\Sigma$ on $\mathcal{X}$ 
represented by $\dot{x}=f(x)$, where 
$x \in \mathcal{X}$ and $f(x) \in T_x\mathcal{X}$. 
We assume that $f\in C^2 (\mathcal{X})$ (twice differentiable)
and that the system is forward and backward complete, that is, 
the flow $\psi:\mathbb{R} \times \mathcal{X} \to \mathcal{X}$ of $\Sigma$
satisfies $\frac{d}{dt} \psi(t,x) = f(\psi(t,x))$ for all $t \in \mathbb{R}$ and $x\in \mathcal{X}$. 
In what follows, for simplicity, we will also use the mapping $\psi^t(\cdot) \triangleq \psi(t,\cdot): \mathcal{X} \to \mathcal{X}$
and we will sometimes refer to the trajectory $x(\cdot)\triangleq\psi(\cdot,x_0)$ of $\Sigma$ from the initial condition $x(t_0)=\psi(t_0,x_0)$.


To characterize the property of differential positivity
we will make use of the notion of prolonged dynamics
$\delta \Sigma$ of $\Sigma$, represented by
\begin{equation*}
\label{eq:prolonged_system}
\delta \Sigma:\left \{ \begin{array}{rcl} \dot{x} & = & f(x) \\
\dot{\delta x} & = & \partial f (x) \delta x
\end{array} \right.
\end{equation*}
where $(x,\delta x)$ belongs to the tangent bundle $T \mathcal{X} = \bigcup_{x \in \mathcal{X}} \{x\} \times T_x \mathcal{X}$, \cite{Crouch1987}. 
The flow of $\delta \Sigma$ is a mapping in $\mathbb{R} \times T\mathcal{X} \to T\mathcal{X}$ 
specified by $(x,\delta x) \mapsto (\psi^t(x),\partial \psi^t(x) \delta x)$.
}

\subsection{Differential positivity}

{\color{black} 
A linear system on $\real^n$ is positive if 
there exists a cone $\calK\subseteq \real^n$ 
which is forward invariant for the system dynamics. Indeed,
given $\dot{x} = A x$, positivity reads $e^{At} \calK \subseteq \calK$ for all $t\geq 0$.
Differential positivity is a way to extend linear positivity 
to nonlinear dynamics, by requiring that a given cone (field) is forward invariant for the 
prolonged dynamics $\delta \Sigma$, \cite{Forni_diff_pos}. 

We endow the manifold $\calX$ with a cone field
}
\begin{equation*}
\mathcal{K}(x) \subseteq T_x \mathcal{X} \qquad \forall x \in \mathcal{X} \ .
\end{equation*}
Each cone $\mathcal{K}(x)$ is closed 
{\color{black} and solid}, and satisfies the following properties: for all $x \in \mathcal{X}$, (i) $\mathcal{K}(x)+\mathcal{K}(x) \subseteq \mathcal{K}(x)$, (ii) $\alpha \mathcal{K}(x) \subseteq \mathcal{K}(x) $ for all $\alpha \in \mathbb{R}^+$, (iii) $\mathcal{K}(x) \cap -\mathcal{K}(x)=\{0\}$
{\color{black} 
(i.e. convex and pointed). 
The forward invariance of the cone field along the prolonged dynamics $\delta \Sigma$ reads 
as follows \cite{Forni_diff_pos}.
\begin{definition}[Differential positivity]
\label{def_diff_pos}
The system $\Sigma$ is differentially positive (with respect to the cone field $\mathcal{K}$) if the flow of the prolonged system $\delta \Sigma$ leaves the cone invariant
\begin{equation*}
\partial \psi^t(x) \mathcal{K}(x) \subseteq \mathcal{K}(\psi^t(x)) \qquad \forall x\in \mathcal{X}\,, \forall t>0\,.
\end{equation*}
In addition, $\Sigma$ is (uniformly) strictly differentially positive if it is differentially positive and if there exist a constant $T>0$ and a cone field $\mathcal{R}(x) \subset \textrm{int} \mathcal{K}(x) \cup \{0\}$ such that
\begin{equation*}
\partial \psi^t(x) \mathcal{K}(x) \subseteq \mathcal{R}(\psi^t(x)) \qquad \forall x\in \mathcal{X}\,, \forall t \geq T\,. \vspace{-6mm}
\end{equation*} 
\hfill $\diamond$ \vspace{1mm}
\end{definition}
An illustration of the strict differential positivity property is 
provided in Figure \ref{fig:diff_positivity}.
}
\begin{figure}[htbp]
\centering
\includegraphics[width=0.65\columnwidth]{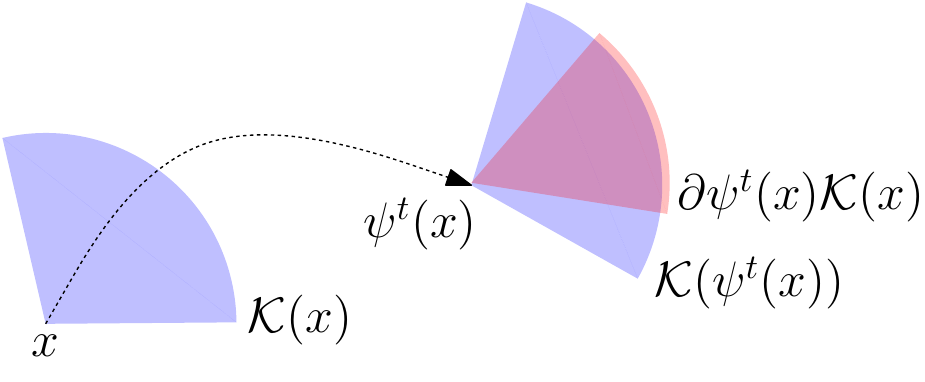}  
\caption{Differential positivity requires the forward
invariance of the cone field $\calK(x)$ along trajectories. 
Strict differential positivity requires that along trajectories 
the rays of the initial cone converge towards each other.}
\label{fig:diff_positivity}
\end{figure}

{\color{black} 
To avoid pathological cases, we assume that for every pair of points
$x_1,x_2\in \calX$, 
there exists a linear invertible mapping  
$\Gamma(x_1,x_2):T_{x_1}\calX \to T_{x_2}\calX$ 
such that 
$\Gamma(x_1,x_2)\calK(x_1) = \calK(x_2)$
and
$\Gamma(x_1,x_2)\calR(x_1) = \calR(x_2)$.
Furthermore, we consider the representation 
\begin{equation*}
\begin{array}{rcl}
\mathcal{K}(x) &=& \{\delta x \in T_x \mathcal{X} \,|\, k_i(x,\delta x) \geq 0 \quad  \forall i=1,\dots,m \} \vspace{2mm}\\
\mathcal{R}(x)&=&\{\delta x \in T_x \mathcal{X} \,|\, k_i(x,\frac{\delta x}{|\delta x|}) \geq \varepsilon \quad \forall i=1,\dots,m \} 
\end{array}
\end{equation*}
for some $m \in \mathbb{N}$ and $\varepsilon > 0$, 
where $k_i:T\calX \to \mathbb{R}$ are smooth functions 
(the reader is referred to \cite{Forni_diff_pos, Forni2015} for details).

It follows from Definition \ref{def_diff_pos} that 
$\Sigma$ is 
differentially positive if from
any initial condition $k_i(x, \delta x) \geq 0$, $i\in \{1,\dots,m\}$,
the prolonged system satisfies
$k_i(\psi^t(x),\partial \psi^t(x) \delta x) \geq 0$
for all  $i\in \{1,\dots,m\}$ and all $t \geq 0$.
In addition, strictly differential positivity requires
$k_i\left(\psi^t(x), \frac{\partial \psi^t(x) \delta x}{|\partial \psi^t(x) \delta x|}\right) \geq \varepsilon$,
for all $t \geq T$.
In compact sets, both properties can be checked by simple geometric conditions \cite{Forni2015}.

Strictly differentially positive systems enjoy a
projective contraction property \cite{Forni_diff_pos,Bushell},
which leads to the existence of the so-called 
Perron-Frobenius vector field $\mathbf{w}(x) \in \textrm{int}\calK(x)$,
the differential equivalent of the Perron-Frobenius eigenvector
of linear positive mappings. The ray 
$\{ \lambda \mathbf{w}(x) \,|\, \lambda \geq 0 \} \subset  \textrm{int}\calK(x)\cup\{0\}$ 
is an attractor for the prolonged dynamics, in the precise sense that}
\begin{equation}
\label{def_PF_vec}
\lim_{t \rightarrow \infty} \frac{\partial \psi^t(\psi^{-t}(x)) \delta x}{|\partial \psi^t(\psi^{-t}(x)) \delta x |}=\mathbf{w}(x)
\end{equation}
for all $x \in \mathcal{X}$ and $\delta x \in \mathcal{K}(\psi^{-t}(x)) \setminus \{0\}$.
{\color{black} 
It follows that, $\mathbf{w}(\psi^t(x)) =  \partial\psi ^t(x) \mathbf{w}(x) / |\partial\psi ^t(x) \mathbf{w}(x)|$.

In the next sections we will use the notion of 
Perron-Frobenius curve $\gamma^{\mathbf{w}}:I \subseteq \mathbb{R} \to \mathcal{X}$,
which is an integral curve of the Perron Frobenius vector field, i.e.  
$\frac{d}{ds}\gamma^{\mathbf{w}}(s)=\mathbf{w}(\gamma^\mathbf{w}(s))$ for all $s\in I$.
}

\subsection{Koopman operator}

The so-called Koopman operator describes the evolution of observables $g:\mathcal{X} \to \mathbb{C}$ along the trajectories of $\Sigma$.
\begin{definition}[Koopman operator]
For a given functional space $\mathcal{G}$, the (semi-)group of Koopman operators $U^t:\mathcal{G} \rightarrow \mathcal{G}$ associated with a system $\Sigma$ is defined by
\begin{equation*}
U^t g = g \circ \psi^t\,, \qquad g \in \mathcal{G} \,, t\in\mathbb{R}
\end{equation*}
where $\psi^t(\cdot)$ is the flow map of $\Sigma$.
\hfill $\diamond$
\end{definition}

Even when $\Sigma$ is a nonlinear system, the Koopman operator is linear, so that it can be studied through its spectral properties.
\begin{definition}[Koopman eigenfunction and eigenvalue]
The observable $\phi_\lambda \in \mathcal{G}$ is an eigenfunction of the Koopman operator (called Koopman eigenfunction hereafter) if there exists a value $\lambda \in \mathbb{C}$ such that
\begin{equation}
\label{phi_evol}
(\phi_{\lambda} \circ \psi^t)(x)=U^t \phi_\lambda(x)=e^{\lambda t} \phi_\lambda(x) \qquad \forall x \in \mathcal{X}\,.
\end{equation}
The value $\lambda$ is the associated eigenvalue.
\hfill $\diamond$
\end{definition}
\begin{remark}
When $\mathcal{G} \subseteq C^1(\mathcal{X})$, the semi-group of Koopman operators admits the infinitesimal generator $L:\mathcal{G} \to \mathcal{G}$ defined by $L g(x) =\partial g(x) [f(x)]$ and we have $U^t=e^{tL}$. In this case, a Koopman eigenfunction $\phi_\lambda$ satisfies
\begin{equation}
\label{PDE_Koop}
L \phi_\lambda(x) = \partial \phi_\lambda(x) [f(x)] = \lambda \phi_\lambda(x) \,. \vspace{-3mm}
\end{equation}
\hfill $\diamond$
\end{remark}

The Koopman eigenfunctions capture important geometric properties of the dynamics (see e.g. \cite{Mauroy_Mezic,MMM_isostables,Mezic}). In Section \ref{sec_gen_results}, they will be related to the differential positivity properties of the system. As a preliminary, we consider the semi-group of Koopman operators $\tilde{U}^t$ associated with the prolonged system $\delta \Sigma$, i.e.
\begin{equation*}
\tilde{U}^t \tilde{g}(x,\delta x) = \tilde{g}(\psi^t(x),\partial \psi^t(x) \delta x)
\end{equation*}
with the observables $\tilde{g}: T \mathcal{X} \to \mathbb{C}$. We have the following result.
\begin{lemma}
\label{lemma}
Suppose that $\phi_\lambda \in C^1(\mathcal{X})$ is an eigenfunction of $U^t$ associated with the system $\Sigma$. Then the Koopman operator $\tilde{U}^t$ associated with the prolonged system $\delta \Sigma$ admits the eigenfunctions (in the appropriate functional space)
\begin{eqnarray*}
\tilde{\phi}^{(1)}_\lambda(x,\delta x) & = & \phi_\lambda(x) \\
\tilde{\phi}^{(2)}_\lambda(x,\delta x) & = & \partial \phi_\lambda(x) \delta x
\end{eqnarray*}
for all $(x,\delta x) \in T\mathcal{X}$.
\end{lemma}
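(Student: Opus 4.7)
The plan is to verify the two eigenfunction equations directly from the definition of $\tilde{U}^t$, using nothing more than the defining identity \eqref{phi_evol} and the chain rule on manifolds.

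For $\tilde{\phi}^{(1)}_\lambda$, I would just unfold the definition of $\tilde{U}^t$: since $\tilde{\phi}^{(1)}_\lambda$ does not depend on the tangent coordinate $\delta x$, we have $\tilde{U}^t \tilde{\phi}^{(1)}_\lambda(x,\delta x) = \phi_\lambda(\psi^t(x))$, and \eqref{phi_evol} gives $\phi_\lambda(\psi^t(x)) = e^{\lambda t}\phi_\lambda(x) = e^{\lambda t}\tilde{\phi}^{(1)}_\lambda(x,\delta x)$, so $\tilde{\phi}^{(1)}_\lambda$ is an eigenfunction of $\tilde{U}^t$ with eigenvalue $\lambda$.

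For $\tilde{\phi}^{(2)}_\lambda$, the idea is to differentiate \eqref{phi_evol} in $x$. The eigenfunction identity $\phi_\lambda \circ \psi^t = e^{\lambda t}\phi_\lambda$ is an equality of $C^1$ functions on $\mathcal{X}$, so I can take the differential on both sides at $x$ and apply the resulting linear map to $\delta x \in T_x\mathcal{X}$. On the left, the chain rule yields
\begin{equation*}
\partial(\phi_\lambda \circ \psi^t)(x)\delta x \;=\; \partial \phi_\lambda(\psi^t(x))\,\partial\psi^t(x)\delta x,
\end{equation*}
while on the right we obtain $e^{\lambda t}\partial \phi_\lambda(x)\delta x$. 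By the definition of $\tilde{U}^t$, the left-hand side is exactly $\tilde{U}^t \tilde{\phi}^{(2)}_\lambda(x,\delta x)$, and the right-hand side is $e^{\lambda t}\tilde{\phi}^{(2)}_\lambda(x,\delta x)$, which establishes the claim.

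There is really no hard obstacle here; the only point that deserves a brief comment is the functional-space issue parenthetically flagged in the statement. One should check that $\tilde{\phi}^{(2)}_\lambda$ is continuous (which follows from $\phi_\lambda \in C^1$) and lies in whatever space $\tilde{U}^t$ acts on, so that the word \emph{eigenfunction} is legitimate; otherwise, the computation above is simply the pointwise statement $\tilde{U}^t \tilde{\phi}^{(2)}_\lambda = e^{\lambda t}\tilde{\phi}^{(2)}_\lambda$ on $T\mathcal{X}$. Equivalently, using the infinitesimal generator viewpoint of the remark following \eqref{PDE_Koop}, one could verify $\tilde{L}\tilde{\phi}^{(2)}_\lambda = \lambda \tilde{\phi}^{(2)}_\lambda$ by differentiating \eqref{PDE_Koop} with respect to $x$, which gives a second, consistency check of the result.
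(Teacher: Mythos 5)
Your proof is correct and follows essentially the same route as the paper: the identity for $\tilde{\phi}^{(1)}_\lambda$ is immediate from \eqref{phi_evol}, and the one for $\tilde{\phi}^{(2)}_\lambda$ is obtained by differentiating $\phi_\lambda\circ\psi^t = e^{\lambda t}\phi_\lambda$ and applying the chain rule, which is exactly the computation in \eqref{evol_lemma}. The remark on the functional space and the generator-based consistency check are fine but add nothing beyond the paper's argument.
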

\begin{proof}
It is clear that $\tilde{U}^t \tilde{\phi}^{(1)}_\lambda = e^{\lambda t} \tilde{\phi}^{(1)}_\lambda$. In addition,
\begin{equation}
\label{evol_lemma}
\begin{split}
\tilde{U}^t \tilde{\phi}^{(2)}_\lambda(x,\delta x) & = \partial \phi_\lambda(\psi^t(x)) [\partial \psi^t(x)\delta x] \\
&  = \partial (\phi_\lambda \circ \psi^t)(x)\delta x \\
& = e^{\lambda t} \partial \phi_\lambda(x) \delta x \\
& = e^{\lambda t} \tilde{\phi}^{(2)}_\lambda (x,\delta x)\,,
\end{split}
\end{equation}
where we used \eqref{phi_evol}.
\end{proof}
If $\Sigma$ admits a fixed point, the prolonged system $\delta \Sigma$ is characterized by a star node (with eigenvalues of multiplicity two). In this case, it is known that the corresponding Koopman eigenvalues are also of multiplicity two (see Remark 2 in \cite{MMM_isostables}).

{\color{black}
\section{From Koopman eigenfunctions to cone fields}
\label{sec_gen_results}
}

In this section, we present general results that connect differential positivity to the spectral properties of the Koopman operator. 
We show that a system is differentially positive if 
there exist specific independent Koopman eigenfunctions.
\begin{proposition}
\label{prop_gen}
Suppose that the $n$-dimensional system $\Sigma$ admits a set of Koopman eigenfunctions $\phi_{\lambda_j}\in C^1(\mathcal{X})$, $j=1,\dots,n${, $\Re\{\lambda_1\} \geq \Re\{\lambda_{j}\}$,} such that the linear map 
$\partial \Phi(x):T_x \mathcal{X} \to \mathbb{C}^n$, 
$$\partial \Phi(x) \delta x = (\partial \phi_{\lambda_1}(x) \delta x,\dots,\partial \phi_{\lambda_n}(x) \delta x)$$ is injective for all $x \in \mathcal{X}$. Then $\Sigma$ is differentially positive if one of the following is satisfied:
{\color{black}
\begin{enumerate}
\item $\lambda_1 \in \mathbb{R}$. The cone field reads $\mathcal{K}(x)$
\begin{equation}
\label{cone_phi}
\hspace{-5mm}\left\{\delta x \!\in\! \mathcal{T}_x \mathcal{X} |\, \partial \phi_{\lambda_1}(x)\delta x - |\partial \phi_{\lambda_j}(x)\delta x | \geq 0,\, \forall j \geq 2 \right\} ;
\end{equation}
\item $\lambda_1 \in i\mathbb{R}$ with $\angle \phi_{\lambda_1} \in C^1(\mathcal{X})$ and $|\phi_{\lambda_1}|$ is constant on $\mathcal{X}$.
The cone field reads $\mathcal{K}(x)$
\begin{equation}
\label{cone_phi2}
\hspace{-5mm}\left\{\delta x \in \mathcal{T}_x \mathcal{X}
\  |\, \partial \angle \phi_{\lambda_1}(x)\delta x - |\partial \phi_{\lambda_j}(x)\delta x | \geq 0 \,, \ \forall j \geq 2 \right \} .
\end{equation}
\end{enumerate}
}
\noindent
The system is strictly differentially positive if $\Re\{\!\lambda_1\!\} \!\!>\! \Re\{\!\lambda_2\!\}$.
\end{proposition}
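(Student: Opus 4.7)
The plan is to exploit Lemma \ref{lemma}, which implies that along the prolonged flow each one-form evolves by scaling: $\partial \phi_{\lambda_j}(\psi^t(x))[\partial \psi^t(x)\delta x] = e^{\lambda_j t}\partial \phi_{\lambda_j}(x)\delta x$. Forward invariance of $\calK$ then reduces to comparing the exponential rates attached to the dominant eigenfunction $\phi_{\lambda_1}$ against those of the others.

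For Case 1 ($\lambda_1\in\mathbb{R}$), given $\delta x\in\calK(x)$ the $j$-th defining inequality evaluated at $\psi^t(x)$ on $\partial\psi^t(x)\delta x$ becomes
\begin{equation*}
e^{\lambda_1 t}\bigl[\,\partial\phi_{\lambda_1}(x)\delta x - e^{(\Re\{\lambda_j\}-\lambda_1)t}\,|\partial\phi_{\lambda_j}(x)\delta x|\,\bigr].
\end{equation*}
Since $\Re\{\lambda_j\}\leq\lambda_1$ forces $e^{(\Re\{\lambda_j\}-\lambda_1)t}\leq 1$ for $t\geq 0$, and $\delta x\in\calK(x)$ yields $\partial\phi_{\lambda_1}(x)\delta x\geq|\partial\phi_{\lambda_j}(x)\delta x|$, the bracket is non-negative, so $\partial\psi^t(x)\delta x\in\calK(\psi^t(x))$. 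For Case 2 ($\lambda_1\in i\mathbb{R}$, $|\phi_{\lambda_1}|$ constant), writing $\phi_{\lambda_1}=|\phi_{\lambda_1}|e^{i\angle\phi_{\lambda_1}}$ in \eqref{phi_evol} gives $\angle\phi_{\lambda_1}\circ\psi^t = \angle\phi_{\lambda_1}+\Im\{\lambda_1\}\,t$ modulo $2\pi$; differentiating in $x$ for fixed $t$, the additive term vanishes and $\partial\angle\phi_{\lambda_1}(\psi^t(x))[\partial\psi^t(x)\delta x] = \partial\angle\phi_{\lambda_1}(x)\delta x$, so the first term of the cone inequality is conserved while the moduli scale as $e^{\Re\{\lambda_j\}t}|\partial\phi_{\lambda_j}(x)\delta x|$ with $\Re\{\lambda_j\}\leq 0$ and thus do not grow, giving again forward invariance of $\calK$.

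For strict differential positivity, I would verify $k_i(\psi^t(x),\partial\psi^t(x)\delta x/|\partial\psi^t(x)\delta x|)\geq\varepsilon$ for $t\geq T$. The injectivity of $\partial\Phi(x)$ between equal-dimensional spaces provides a norm equivalence $C_1(x)|\delta x|\leq \|\partial\Phi(x)\delta x\|\leq C_2(x)|\delta x|$ which, for $\delta x\in\calK(x)$ and $t$ large, squeezes $|\partial\psi^t(x)\delta x|$ into constant multiples of $e^{\lambda_1 t}\partial\phi_{\lambda_1}(x)\delta x$. Combining this with the strict spectral gap $\Re\{\lambda_1\}>\Re\{\lambda_j\}$, the correction $e^{(\Re\{\lambda_j\}-\lambda_1)t}$ decays to $0$, so the normalized $k_i$ tend to a strictly positive limit and exceed $\varepsilon$ once $T$ is taken large enough; the same scheme applies in Case 2, where $\Re\{\lambda_j\}<0$ forces the moduli to decay against the conserved quantity $\partial\angle\phi_{\lambda_1}(x)\delta x$.

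The main obstacle I expect is the \emph{uniformity} of the constants across $\calX$ in the strict case: the bounds $C_1(x),C_2(x)$ coming from injectivity of $\partial\Phi$, together with the ratios $|\partial\phi_{\lambda_j}(x)\delta x|/\partial\phi_{\lambda_1}(x)\delta x$ on $\delta x\in\calK(x)$, are a priori point-dependent, and yet the constants $T$ and $\varepsilon$ in Definition \ref{def_diff_pos} must be independent of $x$. The paper's standing assumption that an invertible mapping $\Gamma(x_1,x_2)$ carries $\calK(x_1)$ to $\calK(x_2)$ (and $\calR(x_1)$ to $\calR(x_2)$) should provide the homogeneity needed to propagate the bounds uniformly; in Case 2 one additionally needs $\partial\angle\phi_{\lambda_1}$ to be globally well-defined as a one-form on $\calX$, which is ensured by the standing assumption $\angle\phi_{\lambda_1}\in C^1(\calX)$.
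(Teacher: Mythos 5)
Your invariance computation is essentially the paper's: in both cases you propagate the cone inequalities with Lemma \ref{lemma} (equivalently, the identity $\partial \phi_{\lambda_j}(\psi^t(x))[\partial \psi^t(x)\delta x]=e^{\lambda_j t}\,\partial \phi_{\lambda_j}(x)\delta x$) and conclude from $\Re\{\lambda_j\}\leq \Re\{\lambda_1\}$; your Case 2 observation that $\partial \angle\phi_{\lambda_1}$ applied to the transported vector is conserved (obtained by differentiating $\angle\phi_{\lambda_1}\circ\psi^t=\angle\phi_{\lambda_1}+\Im\{\lambda_1\}t$ in $x$) is the same fact the paper derives through $\partial \phi_{\lambda_1}=i\phi_{\lambda_1}\partial\angle\phi_{\lambda_1}$. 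On strict differential positivity you are, if anything, more explicit than the paper, which disposes of it in one sentence; the uniformity issue you raise is real and is not resolved by the paper either, so I do not hold your sketchiness there against you.

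The genuine gap is that you never verify that the sets \eqref{cone_phi} and \eqref{cone_phi2} are cone fields in the sense required by Definition \ref{def_diff_pos}: convex, positively homogeneous and, crucially, pointed, $\calK(x)\cap-\calK(x)=\{0\}$. This is not a formality: forward invariance of a non-pointed set (e.g. all of $T_x\calX$) asserts nothing, and pointedness is exactly where the injectivity hypothesis on $\partial\Phi(x)$ does its work for the plain (non-strict) statement --- in your write-up injectivity appears only in the strict case, which is the symptom of the omission. In Case 1 the check is short: $\pm\,\partial\phi_{\lambda_1}(x)\delta x\geq |\partial\phi_{\lambda_j}(x)\delta x|$ forces $\partial\phi_{\lambda_j}(x)\delta x=0$ for all $j$, hence $\delta x=0$ by injectivity. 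In Case 2 one additionally needs $\partial\phi_{\lambda_1}=i\phi_{\lambda_1}\partial\angle\phi_{\lambda_1}$ (valid because $|\phi_{\lambda_1}|$ is constant and nonzero, the paper's \eqref{property_angle}) to turn $\partial\angle\phi_{\lambda_1}(x)\delta x=0$ into $\partial\phi_{\lambda_1}(x)\delta x=0$ before invoking injectivity; your phase-based route bypasses this identity and therefore must supply it here. A minor further slip: $\partial\Phi(x)$ maps an $n$-dimensional real space into $\mathbb{C}^n\cong\mathbb{R}^{2n}$, so it is not a map between equal-dimensional spaces; injectivity still yields the pointwise lower bound $C_1(x)|\delta x|\leq \|\partial\Phi(x)\delta x\|$ you use, but not by dimension counting.
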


{
Every cone field $\mathcal{K}$ 
is the local representation of a (global) conal order
$\prec\,\,\subseteq\!\calX\!\times\!\calX$.
The order $\prec$ is derived from $\mathcal{K}$  by integration:
$x_1 \prec x_2$ if and only if there exists a curve $\gamma:[s_1,s_2] \subseteq \mathbb{R} \to \mathcal{X}$ with $\gamma(s_1)=x_1$ and $\gamma(s_2)=x_2$ such that 
$\frac{d}{ds}\gamma(s) \in \mathcal{K}(\gamma(s))$
for all $s \in [s_1,s_2]$. 
It is noticeable that the conal order 
given by the cone field of Proposition \ref{prop_gen}
has the following equivalent characterization:
$x_1 \!\prec\! x_2$ if and only if
$\phi_{\lambda_1}(x_2)-\phi_{\lambda_1}(x_1) + |\phi_{\lambda_j}(x_2)-\phi_{\lambda_j}(x_1)| \!>\! 0$, for all  $j\geq 2$ (replace $\phi_{\lambda_1}$ by $\angle \phi_{\lambda_1}$ in case 2)).
Indeed, the contracting cone field defined locally
by the Koopman eigenfunctions of the prolonged system $\delta \Sigma$
induces a conal order 
which is captured directly by the Koopman eigenfunctions of $\Sigma$.

\begin{remark}
When $\Re\{\lambda_j\}<0$ for all $j$ in Proposition \ref{prop_gen},
the Koopman eigenfunctions of $\delta \Sigma$ can be used to
construct a differential Finsler-Lyapunov function
which decays along the trajectories of the system.
By integration, a differential Finsler-Lyapunov function 
induces a distance on the system state manifold, which
also decays along any pair of trajectories 
of $\Sigma$, establishing contraction \cite{Forni}.
As above, it is noticeable that such distance has an equivalent
characterization based on the Koopman eigenfunctions of $\Sigma$.
Indeed, the Koopman eigenfunctions of $\Sigma$ capture the contractive
behavior of the system \cite{Mauroy_Mezic_stability,MMM_isostables}
\end{remark}

The next proposition illustrates}
the relationship between Koopman eigenfunctions 
and the Perron-Frobenius vector field.
\begin{proposition}
\label{prop_PF_vec}
Suppose that there exists a set of eigenfunctions $\phi_{\lambda_j}$ that satisfies the conditions of Proposition \ref{prop_gen} with $\Re\{\lambda_1\}> \Re\{\lambda_j\}$ for all $j$ (i.e. $\Sigma$ is strictly differentially positive). Then the Perron-Frobenius vector field is the unique vector field $w:\mathcal{X} \to T_x \mathcal{X}$, $|\mathbf{w}(\cdot)|=1$, that satisfies
\begin{equation}
\label{cond_PF_vec}
\partial \phi_{\lambda_j}(x) [\mathbf{w}(x)]=0 \qquad j\geq 2\,. \vspace{-3mm}
\end{equation}
\end{proposition}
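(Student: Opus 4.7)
The plan is to combine Lemma \ref{lemma} applied to $\tilde\phi_{\lambda_j}^{(2)}$ with the flow-invariance $\partial\psi^t(x)\mathbf{w}(x)=\beta(t)\mathbf{w}(\psi^t(x))$, $\beta(t)>0$, of the Perron--Frobenius field, so as to read \eqref{cond_PF_vec} directly off the spectral identities; uniqueness will then follow from the injectivity of $\partial\Phi(x)$ by a short dimension count.

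For existence, I would substitute $\delta x=\mathbf{w}(x)$ into the identity $\partial\phi_{\lambda_j}(\psi^t(x))[\partial\psi^t(x)\delta x]=e^{\lambda_j t}\partial\phi_{\lambda_j}(x)\delta x$ of Lemma \ref{lemma}. Using the flow-invariance of $\mathbf{w}$, setting $y:=\psi^t(x)$, and dividing the identity for any $j\geq 2$ by the one for $j=1$ eliminates the positive scalar $\beta(t)$ and yields
\begin{equation*}
\frac{\partial\phi_{\lambda_j}(y)\mathbf{w}(y)}{\partial\phi_{\lambda_1}(y)\mathbf{w}(y)}=e^{(\lambda_j-\lambda_1)t}\,\frac{\partial\phi_{\lambda_j}(\psi^{-t}(y))\mathbf{w}(\psi^{-t}(y))}{\partial\phi_{\lambda_1}(\psi^{-t}(y))\mathbf{w}(\psi^{-t}(y))}\,.
\end{equation*}
Since $\mathbf{w}(z)\in\mathrm{int}\,\mathcal{K}(z)$ for every $z$, the cone description of Proposition \ref{prop_gen} (case~1) gives $\partial\phi_{\lambda_1}(z)\mathbf{w}(z)>|\partial\phi_{\lambda_j}(z)\mathbf{w}(z)|\geq 0$, so the right-hand fraction has modulus strictly below $1$ uniformly in $z$. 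The prefactor has modulus $e^{\mathrm{Re}(\lambda_j-\lambda_1)t}\to 0$ as $t\to\infty$, whereas the left-hand side is independent of $t$; hence $\partial\phi_{\lambda_j}(y)\mathbf{w}(y)=0$, which is \eqref{cond_PF_vec}. Case~2 is identical after replacing $\phi_{\lambda_1}$ by $\angle\phi_{\lambda_1}$: because $|\phi_{\lambda_1}|$ is constant one has $\partial\phi_{\lambda_1}(x)=i|\phi_{\lambda_1}|e^{i\angle\phi_{\lambda_1}(x)}\partial\angle\phi_{\lambda_1}(x)$, and $\lambda_1=i\omega$ gives $\partial\angle\phi_{\lambda_1}(\psi^t(x))\partial\psi^t(x)=\partial\angle\phi_{\lambda_1}(x)$, so the new denominator carries no exponential factor.

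For uniqueness, let $K(x):=\{\delta x\in T_x\mathcal{X}:\partial\phi_{\lambda_j}(x)\delta x=0,\ j\geq 2\}$. Existence gives $\mathbf{w}(x)\in K(x)$, so $\dim_{\mathbb{R}}K(x)\geq 1$. If two $\mathbb{R}$-linearly independent vectors $\mathbf{v}_1,\mathbf{v}_2$ lived in $K(x)$, the real numbers $\partial\phi_{\lambda_1}(x)\mathbf{v}_i$ (respectively $\partial\angle\phi_{\lambda_1}(x)\mathbf{v}_i$ in case~2; note that these share their kernel with $\partial\phi_{\lambda_1}(x)$ since $|\phi_{\lambda_1}|$ is constant) could not both vanish without contradicting the injectivity of $\partial\Phi(x)$, and a suitable real combination would then produce a nonzero vector in $K(x)\cap\ker\partial\phi_{\lambda_1}(x)=\ker\partial\Phi(x)$, again contradicting injectivity. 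Hence $K(x)$ is a real line, and imposing $|\mathbf{w}|=1$ together with $\mathbf{w}\in\mathrm{int}\,\mathcal{K}(x)$ fixes a unique vector, which must coincide with the Perron--Frobenius field. The main technical point is the uniform-in-$t$ bound on the right-hand fraction along the full backward orbit $z=\psi^{-t}(y)$; this is supplied precisely by $\mathbf{w}(z)$ lying in the \emph{open} cone globally, which in turn is a consequence of strict differential positivity. Everything else is spectral bookkeeping through Lemma \ref{lemma}.
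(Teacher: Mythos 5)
Your proposal is correct in substance, and its existence argument takes a genuinely (if mildly) different route from the paper's. The paper works directly from the defining limit \eqref{def_PF_vec}: it takes an arbitrary $\delta x\in\mathcal{K}(\psi^{-t}(x))$, uses Lemma \ref{lemma} to write $|\partial\phi_{\lambda_j}(x)[\partial\psi^t(\psi^{-t}(x))\delta x]|=e^{\Re\{\lambda_j\}t}|\partial\phi_{\lambda_j}(\psi^{-t}(x))\delta x|$, bounds the right-hand side through the cone inequality and the operator norm $\|\partial\phi_{\lambda_1}(x)\|$ at the \emph{fixed} endpoint $x$, divides by $|\partial\psi^t(\psi^{-t}(x))\delta x|$ and lets $t\to\infty$. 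You instead use the ray-invariance $\partial\psi^t(x)\mathbf{w}(x)=\beta(t)\mathbf{w}(\psi^t(x))$ and eliminate $\beta(t)$ by taking the ratio of the $j$-th and first spectral identities; this avoids the limit definition and the operator-norm estimate, but requires knowing $\mathbf{w}(z)\in\mathcal{K}(z)$ at \emph{every} point of the backward orbit (a property of the Perron--Frobenius field quoted in the paper), whereas the paper only needs the cone condition at the initial time. Your uniqueness argument (the solution set of \eqref{cond_PF_vec} is a real line, by the injectivity of $\partial\Phi(x)$ and a two-vector elimination) is essentially the paper's dimension count, spelled out in more detail, and your use of $\phi_{\lambda_1}$ vs.\ $\angle\phi_{\lambda_1}$ via $\partial\phi_{\lambda_1}=i\phi_{\lambda_1}\partial\angle\phi_{\lambda_1}$ matches the paper.

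One step deserves correction, though it does not invalidate the proof: you claim the right-hand fraction has modulus "strictly below $1$ uniformly in $z$" as a consequence of $\mathbf{w}(z)\in\mathrm{int}\,\mathcal{K}(z)$, and you call this the main technical point. Pointwise interior membership does not give a uniform bound over the (generally noncompact) backward orbit, so as stated this is unjustified. Fortunately it is also unnecessary: since the prefactor $e^{(\lambda_j-\lambda_1)t}$ already tends to zero, all you need is that the fraction is bounded, and the non-strict bound of modulus at most $1$ follows from $\mathbf{w}(z)\in\mathcal{K}(z)$ together with the nonvanishing of the denominator; the latter holds because $\partial\phi_{\lambda_1}(z)\mathbf{w}(z)=0$ with $\mathbf{w}(z)\in\mathcal{K}(z)$ would force $\partial\phi_{\lambda_j}(z)\mathbf{w}(z)=0$ for all $j$, hence $\mathbf{w}(z)=0$ by injectivity of $\partial\Phi(z)$, contradicting $|\mathbf{w}(z)|=1$. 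With that repair (and the same remark for the $\partial\angle\phi_{\lambda_1}$ denominator in case 2), your argument goes through.
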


Proposition \ref{prop_PF_vec} implies that the Perron-Frobenius vector field is related to zero level sets of Koopman eigenfunctions associated with the prolonged system $\delta \Sigma$. At global level, the integral curves of the Perron-Frobenius vector field correspond to the intersection of the level sets of Koopman eigenfunctions associated with $\Sigma$. 
{Precisely, any set
\begin{equation}
\label{PF_curve}
\bigcap_{j=2}^m \{x \in \mathcal{X} |\phi_{\lambda_j}(x)=C_j  \}\ , \ \ 
(C_2,\dots,C_m) \in \mathbb{C}^{n-1}
\end{equation}
is the image of some Perron-Frobenius curve.
}

\section{Converse results for differential positivity}
\label{sec_converse}

{
Restricting the analysis to systems $\Sigma$ on 
vector spaces $\calX := \mathbb{R}^n$,
we show that the presence of a stable hyperbolic fixed point or 
of a stable hyperbolic limit cycle is a sufficient 
condition for $\Sigma$ to be strictly differentially positive 
in their basin of attraction.
}


\begin{proposition}[{Hyperbolic stable fixed point}]
\label{prop_fx_pt}
Consider a system $\dot{x}=f(x)$, with $x\in \mathbb{R}^n$ and $f \in C^2$, which admits a fixed point $x^*$ with a basin of attraction $\mathcal{B}(x^*) \subseteq \mathbb{R}^n$. 
{ For $j=1,\dots,n$}, assume that the eigenvalues $\lambda_j$
of the Jacobian matrix $J=\frac{\partial f}{\partial x}(x^*)$ satisfy $0>\Re\{\lambda_j\} \geq \Re\{\lambda_{j+1}\}$ and that the eigenvectors are independent. The system is differentially positive in $\mathcal{B}(x^*)$ if and only if $\lambda_1 \in \mathbb{R}$. Moreover, it is strictly differentially positive if $\lambda_1 > \Re\{\lambda_2\}$.
\end{proposition}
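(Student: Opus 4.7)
The plan is to reduce sufficiency to Proposition \ref{prop_gen} by producing $n$ Koopman eigenfunctions from the hyperbolic stable structure at $x^*$, and to settle necessity by restricting differential positivity to the linearization at $x^*$ and invoking classical Perron--Frobenius theory for linear positive systems.

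For the sufficient direction, I would rely on the standard Koopman-spectral description of the basin of attraction of a hyperbolic stable fixed point with semisimple Jacobian $J = \partial f(x^*)$: the operator $U^t$ admits $n$ eigenfunctions $\phi_{\lambda_j} \in C^1(\calB(x^*))$ whose eigenvalues are precisely the eigenvalues of $J$. These can be built from Laplace averages or, equivalently, from a local linearization near $x^*$ extended globally through the flow via $\phi_{\lambda_j}(x) = e^{-\lambda_j t}\phi_{\lambda_j}(\psi^t(x))$. At $x^*$, $\partial \phi_{\lambda_j}(x^*)$ recovers the left eigenvector of $J$ associated with $\lambda_j$, so the independence hypothesis on the eigenvectors of $J$ yields injectivity of the map $\partial \Phi(x^*)$ of Proposition \ref{prop_gen}. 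Injectivity at an arbitrary $x \in \calB(x^*)$ follows by differentiating the flow identity $\phi_{\lambda_j}\circ \psi^t = e^{\lambda_j t}\phi_{\lambda_j}$, which gives
$$\partial \Phi(x) = \operatorname{diag}(e^{-\lambda_j t}) \, \partial \Phi(\psi^t(x)) \, \partial \psi^t(x);$$
choosing $t$ large enough to drive $\psi^t(x)$ into a neighborhood of $x^*$ where $\partial \Phi$ is already known to be injective, and using invertibility of $\partial \psi^t(x)$, the map $\partial \Phi(x)$ inherits injectivity. Applying Proposition \ref{prop_gen} (case~1) when $\lambda_1 \in \real$ yields differential positivity, and the strict gap $\lambda_1 > \Re\{\lambda_2\}$ upgrades this to strict differential positivity.

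For the necessary direction, I would note that differential positivity on $\calB(x^*)$ forces the prolonged flow at the fixed point, which coincides with the linear flow $e^{Jt}$, to leave the closed, solid, convex, pointed cone $\calK(x^*) \subseteq \real^n$ invariant. Hence $J$ generates a linear positive system with respect to $\calK(x^*)$, and the classical Perron--Frobenius theorem for linear positive systems constrains the dominant eigenvalue of $J$ to be real, which forces $\lambda_1 \in \real$.

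The principal technical obstacle is justifying global $C^1$ regularity and injectivity of $\partial \Phi$ on the entire basin. The local picture near $x^*$ is classical, but the flow-based extension requires care when complex eigenvalues are present (choice of exponential branch and absence of resonances among the $\lambda_j$ that could spoil smoothness of the extension). Under the standing assumption of independent eigenvectors of $J$ the resulting eigenfunctions are well defined and $C^1$ throughout $\calB(x^*)$, so the argument proceeds cleanly within the Koopman framework used in the paper.
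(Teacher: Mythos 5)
Your proposal is correct and follows the same overall strategy as the paper: sufficiency by exhibiting $n$ independent $C^1$ Koopman eigenfunctions on $\calB(x^*)$ and invoking Proposition \ref{prop_gen} with $\lambda_1\in\real$, necessity by restricting to the linearization at $x^*$, where differential positivity forces $e^{Jt}\calK(x^*)\subseteq\calK(x^*)$. The differences lie in how the key steps are discharged. For sufficiency the paper does not extend local eigenfunctions through the flow by hand; it cites Theorem 2.3 of Lan and Mezi\'c, which gives a global $C^1$ diffeomorphism $h:\calB(x^*)\to\real^n$ conjugating the dynamics to $\dot y=Jy$, and sets $\phi_{\lambda_j}=v_j^T h$ with $v_j$ the left eigenvectors of $J$; injectivity of $\partial\Phi(x)$ is then immediate from injectivity of $\partial h(x)$ together with independence of the $v_j$, so your chain-rule propagation $\partial\Phi(x)=\mathrm{diag}(e^{-\lambda_j t})\,\partial\Phi(\psi^t(x))\,\partial\psi^t(x)$ is a correct but unnecessary substitute. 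Note that the regularity issue you flag as the principal obstacle is exactly what this citation supplies, and your justification of it is slightly misplaced: $C^1$ regularity of the eigenfunctions on the basin does not follow from independence of the eigenvectors, nor is any non-resonance condition needed at the $C^1$ level for a stable hyperbolic point of a $C^2$ field (resonances only obstruct higher-order smoothness); eigenvector independence enters only in establishing injectivity of $\partial\Phi$. For necessity, the paper proves the needed Perron--Frobenius fact directly: if $\lambda_1\notin\real$, invariance of $\calK(x^*)$ under the rotating dominant dynamics forces the cone to contain the plane spanned by $\Re\{v\}$ and $\Im\{v\}$, contradicting pointedness; you instead cite the classical result that a proper cone invariant under $e^{Jt}$ for all $t\geq 0$ forces the spectral abscissa of $J$ to be attained by a real eigenvalue, which is an equivalent and equally acceptable route.
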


The result of Proposition \ref{prop_fx_pt} also holds if the fixed point is unstable with $\Re\{\lambda_j\}>0$ for all $j$
{ but it does not hold with a saddle node.
This is not surprising since \cite[Corollary 3]{Forni_diff_pos}
shows examples of hyperbolic saddle nodes that are incompatible
with differential positivity.}


\begin{proposition}[{Hyperbolic stable} limit cycle]
\label{prop_lim_cyc}
If a system $\dot{x}=f(x)$, with $x\in \mathbb{R}^n$ and $f \in C^2$, admits a stable hyperbolic limit cycle $\Gamma$ (with independent eigenvectors of the monodromy matrix), then it is strictly differentially positive in the basin of attraction $\mathcal{B}(\Gamma) \subseteq \mathbb{R}^n$ of $\Gamma$.
\end{proposition}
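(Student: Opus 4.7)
The plan is to apply Proposition \ref{prop_gen}, case 2, by exhibiting $n$ Koopman eigenfunctions of $\Sigma$ on $\mathcal{B}(\Gamma)$ with the required properties. Let $T$ be the period of $\Gamma$ and $\omega := 2\pi/T$. Because $\Gamma$ is hyperbolic and stable, the asymptotic phase map $\theta:\mathcal{B}(\Gamma)\to \mathbb{R}/2\pi\mathbb{Z}$ is $C^1$, and it yields a Koopman eigenfunction $\phi_{\lambda_1}(x) := e^{i\theta(x)}$ with eigenvalue $\lambda_1 = i\omega$. By construction $|\phi_{\lambda_1}|\equiv 1$ and $\angle\phi_{\lambda_1} = \theta\in C^1(\mathcal{B}(\Gamma))$. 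The remaining $n-1$ eigenfunctions $\phi_{\lambda_j}$ ($j=2,\ldots,n$) are those associated with the nontrivial Floquet exponents $\mu_2,\ldots,\mu_n$ of the monodromy matrix; stable hyperbolicity gives $\Re\{\lambda_j\} = \Re\{\mu_j\}<0$, and they are known to exist and to be $C^1$ on the whole basin (they can be computed via the Laplace averages of Section \ref{sec_numerical}). This gives the strict spectral gap $\Re\{\lambda_1\}=0>\Re\{\lambda_j\}$ for $j\geq 2$, which is the spectral condition for strict differential positivity in Proposition \ref{prop_gen}.

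The main obstacle is the verification that the linear map $\partial\Phi(x):T_x\mathbb{R}^n\to\mathbb{C}^n$ is injective at every $x\in\mathcal{B}(\Gamma)$. I would argue this in two stages. Locally, on a tubular neighborhood $N$ of $\Gamma$, injectivity follows from Floquet theory. At any $x^*\in\Gamma$, the relation $L\theta = \omega$ gives $\partial\theta(x^*)f(x^*) = \omega\neq 0$, whereas $\partial\theta(x^*)$ annihilates the stable Floquet subspace (the tangent space to the isochron through $x^*$). Symmetrically, because $\phi_{\mu_j}$ vanishes on $\Gamma$ (as $e^{\mu_j T}\neq 1$), the covectors $\partial\phi_{\mu_j}(x^*)$ annihilate $f(x^*)$ and, being proportional to the duals of the $n-1$ independent Floquet eigenvectors, they span the dual of the stable subspace. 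Hence $\partial\Phi(x^*)$ has full rank $n$, and by continuity the rank-$n$ property extends throughout $N$. Globally, differentiating the eigenfunction relation \eqref{phi_evol} yields
\begin{equation*}
\partial\phi_{\lambda_j}(x)\,\delta x \;=\; e^{-\lambda_j t}\,\partial\phi_{\lambda_j}(\psi^t(x))\bigl[\partial\psi^t(x)\,\delta x\bigr] ,
\end{equation*}
so that $\partial\Phi(x)\delta x = 0$ if and only if $\partial\Phi(\psi^t(x))[\partial\psi^t(x)\delta x] = 0$. Since $\partial\psi^t(x)$ is a linear isomorphism and $\psi^t(x)\in N$ for all $t$ sufficiently large (as $\Gamma$ attracts $\mathcal{B}(\Gamma)$), a nonzero kernel vector at $x$ would produce a nonzero kernel vector inside $N$, contradicting the local result. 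Injectivity therefore propagates to the whole basin.

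Once injectivity is established, Proposition \ref{prop_gen} applies and delivers strict differential positivity on $\mathcal{B}(\Gamma)$ with the cone field \eqref{cone_phi2}. As a byproduct, Proposition \ref{prop_PF_vec} identifies the Perron-Frobenius curves as the common level sets of the Floquet eigenfunctions, i.e.\ as the isochrons of $\Gamma$, which provides a geometric sanity check of the construction.
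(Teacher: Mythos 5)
Your proof reaches the statement by the same overall route as the paper---produce the phase eigenfunction $\phi_{\lambda_1}=e^{i\theta}$ with $\lambda_1=i\omega$, the $n-1$ Floquet eigenfunctions with $\Re\{\lambda_j\}<0$, check injectivity of $\partial\Phi$, and invoke Proposition \ref{prop_gen}, case 2---but the way you secure the technical ingredients is genuinely different. The paper makes a single appeal to the global $C^1$ linearization result of Lan and Mezi\'c \cite{Lan} (Theorem 2.6): the diffeomorphism $h=(h_y,h_\theta):\mathcal{B}(\Gamma)\to\mathbb{R}^{n-1}\times\mathbb{S}^1$ with $\dot y=By$, $\dot\theta=\omega$ simultaneously delivers the existence and $C^1$ regularity of $\phi_{\lambda_1}=e^{ih_\theta}$ and $\phi_{\lambda_j}=v_j^Th_y$ on the whole basin \emph{and} the injectivity of $\partial\Phi$, since $\partial h$ is injective and the left eigenvectors $v_j$ of $B$ are independent. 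You instead take existence for granted (``known to exist and to be $C^1$ on the whole basin'') and then prove injectivity separately by a local Floquet-rank argument on $\Gamma$ propagated backward along the flow via $\partial\phi_{\lambda_j}(x)\delta x=e^{-\lambda_j t}\partial\phi_{\lambda_j}(\psi^t(x))[\partial\psi^t(x)\delta x]$; that propagation argument is correct and is a nice self-contained alternative to reading injectivity off the diffeomorphism. Two caveats, though. First, the basin-wide existence and $C^1$ smoothness of the Floquet eigenfunctions is precisely the nontrivial content that the paper outsources to \cite{Lan}; Laplace averages, which you offer as justification, only yield these eigenfunctions for suitably chosen observables and under non-resonance caveats (the paper itself flags the finiteness issue in Section \ref{sec_numerical}), so as written this step is an appeal to literature rather than a proof---acceptable, but it should cite the linearization theorem explicitly. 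Second, your local full-rank claim uses that the covectors $\partial\phi_{\lambda_j}(x^*)$, $x^*\in\Gamma$, are proportional to the duals of the Floquet eigenvectors; this is true for the \emph{principal} eigenfunctions coming from the Floquet normal form (or from $h$), but it does not follow from the eigenvalue relation alone (an arbitrary eigenfunction at eigenvalue $\lambda_j$, e.g.\ a resonant product, can have vanishing differential on $\Gamma$), so it must be tied to the specific construction of the $\phi_{\lambda_j}$---which again is what the paper's use of $h$ does implicitly.
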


\begin{remark}
{Proposition \ref{prop_PF_vec} and \eqref{PF_curve}}
show that the Perron-Frobenius curves are the intersections of the level sets of $n-1$ Koopman eigenfunctions $\phi_{\lambda_j}$, $j=2,\dots,n$. 
{ Therefore, the Perron-Frobenius curves 
can be interpreted as dual quantities to }
the $(n-1)$-dimensional level sets of the Koopman eigenfunction $\phi_{\lambda_1}$, 
{ the so-called \emph{isostables} for 
fixed points \cite{MMM_isostables} and the \emph{isochrons} for
limit cycles \cite{Mauroy_Mezic}}. \hfill 
$\diamond$
\end{remark}

\section{Numerical computation}
\label{sec_numerical}
{ 
\subsection{Preliminaries on Laplace averages}

In this section we exploit the theoretical results of the paper 
to derive contracting cone fields for fixed points
and limit cycles based on the
Koopman eigenfunctions of the prolonged system.
}


When the trajectories of the system are available, an efficient method for computing Koopman eigenfunctions is based on Laplace averages (see e.g. \cite{Mohr})
\begin{equation}
\label{Lap_av}
g^{av}_\lambda(x)=\lim_{T \rightarrow \infty} \frac{1}{T} \int_0^T (g \circ \psi^t)(x) e^{-\lambda t} \, dt
\end{equation}
for an observable $g:\mathcal{X} \to \mathbb{C}$. 
{ The average \eqref{Lap_av} is well-defined 
(i.e. finite) when 
$\lambda$ is a Koopman eigenvalue and 
$g$ is a well-chosen observable (for instance, $g$ must be zero 
on the attractor when $\Re\{\lambda\}<0$).
From \eqref{Lap_av},}
it is easy to see that $U^t g^{av}_\lambda=e^{\lambda t} g^{av}_\lambda$.
Provided that $g^{av}_\lambda \neq 0$, we can define $\phi_\lambda \triangleq g^{av}_\lambda$.

We can also obtain the Koopman eigenfunction $\tilde{\phi}^{(2)}_\lambda(x,\delta x)=\partial \phi_\lambda(x) \delta x$ associated with the prolonged system $\delta \Sigma$ by computing the Laplace averages along the trajectories of $\delta \Sigma$. This is summarized in the following lemma.
\begin{lemma}
\label{lem2}
Suppose that the Laplace average $g^{av}_\lambda(x)$ is finite and nonzero, so that $\phi_\lambda \triangleq g^{av}_\lambda$. Then, we have
\begin{equation*}
\tilde{\phi}^{(2)}_\lambda(x,\delta x) \triangleq \tilde{g}^{av}_\lambda(x,\delta x)\,,
\end{equation*}
with
\begin{equation}
\label{Lap_av_prolonged}
\tilde{g}^{av}_\lambda(x,\delta x)=\lim_{T \rightarrow \infty} \frac{1}{T} \int_0^T \tilde{g}(\psi^t(x),\partial \psi^t(x)\delta x) e^{-\lambda t} \, dt
\end{equation}
and with the observable $\tilde{g}(x,\delta x)=\partial g(x) \delta x$.
\end{lemma}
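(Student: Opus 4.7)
The plan is to compute $\tilde{\phi}^{(2)}_\lambda(x,\delta x) = \partial \phi_\lambda(x)\delta x = \partial g^{av}_\lambda(x)\delta x$ directly from the definition \eqref{Lap_av} of the Laplace average, and to identify the result with $\tilde{g}^{av}_\lambda(x,\delta x)$ by pulling the differential inside the limit and the integral.

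First, I would rewrite the integrand in \eqref{Lap_av_prolonged} using the chain rule: since $\tilde{g}(x,\delta x) = \partial g(x)\delta x$, we have
\begin{equation*}
\tilde{g}(\psi^t(x), \partial \psi^t(x)\delta x) \;=\; \partial g(\psi^t(x))\,[\partial\psi^t(x)\delta x] \;=\; \partial(g\circ \psi^t)(x)\,\delta x .
\end{equation*}
Therefore the right-hand side of \eqref{Lap_av_prolonged} reads
\begin{equation*}
\tilde{g}^{av}_\lambda(x,\delta x) \;=\; \lim_{T\to\infty}\frac{1}{T}\int_0^T \partial(g\circ\psi^t)(x)\,\delta x \; e^{-\lambda t}\, dt .
\end{equation*}
The second step is to recognize the right-hand side as the differential, in the $x$ direction $\delta x$, of the scalar expression $\frac{1}{T}\int_0^T (g\circ\psi^t)(x)\,e^{-\lambda t} dt$, and then to commute this differential with both the integral over $[0,T]$ and the limit $T\to\infty$. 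Under the standing regularity assumptions ($f\in C^2$, and $g$ smooth enough for the Laplace average to be finite and differentiable in $x$), the map $x\mapsto(g\circ\psi^t)(x)\,e^{-\lambda t}$ is $C^1$, which justifies differentiation under the integral sign for each finite $T$; and the assumed existence (and finiteness) of the Laplace averages associated with $\Sigma$ and with $\delta\Sigma$ justifies the exchange of $\partial$ and $\lim_{T\to\infty}$. Once these exchanges are allowed, we get
\begin{equation*}
\partial g^{av}_\lambda(x)\,\delta x \;=\; \lim_{T\to\infty}\frac{1}{T}\int_0^T \partial(g\circ\psi^t)(x)\,\delta x\; e^{-\lambda t}\, dt \;=\; \tilde{g}^{av}_\lambda(x,\delta x),
\end{equation*}
and since $\phi_\lambda \triangleq g^{av}_\lambda$ and $\tilde{\phi}^{(2)}_\lambda(x,\delta x) = \partial \phi_\lambda(x)\delta x$ by Lemma \ref{lemma}, the conclusion follows.

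The main obstacle is the rigorous justification of interchanging the differential with the limit in $T$, because the averaged integral converges only conditionally in general (its time derivative is typically not summable, which is why the $1/T$ normalization is needed). The clean way to handle this is to note that $\tilde{g}^{av}_\lambda$ is, by construction, the Laplace average of the observable $(x,\delta x)\mapsto \partial g(x)\delta x$ along the flow of the prolonged system $\delta\Sigma$; by the same argument as for \eqref{Lap_av}, $\tilde U^t \tilde g^{av}_\lambda = e^{\lambda t}\tilde g^{av}_\lambda$, so $\tilde g^{av}_\lambda$ is a Koopman eigenfunction of $\delta\Sigma$ at eigenvalue $\lambda$ with the same linear-in-$\delta x$ structure as $\tilde\phi^{(2)}_\lambda$. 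Combined with the formal derivative computation above, which shows equality of the two whenever the commutation holds (for instance on any compact set where $\partial(g\circ\psi^t)(x)$ is uniformly controlled), this identifies $\tilde g^{av}_\lambda$ with $\tilde\phi^{(2)}_\lambda$ as claimed.
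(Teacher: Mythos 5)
Your argument is exactly the paper's proof, which consists of the single line ``The result is obtained by differentiating \eqref{Lap_av}'': you differentiate the Laplace average in the direction $\delta x$, use the chain rule to rewrite $\partial(g\circ\psi^t)(x)\delta x$ as $\tilde g(\psi^t(x),\partial\psi^t(x)\delta x)$, and exchange the differential with the integral and the limit. Your additional discussion of why the interchange is legitimate only adds detail the paper leaves implicit, so the proposal is correct and follows the same route.
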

\begin{proof}
The result is obtained by differentiating \eqref{Lap_av}.
\end{proof}

{ 
\subsection{Fixed points}
}
When the system admits a stable hyperbolic fixed point $x^*$, a contracting cone field is given by \eqref{cone_phi},
{
where the Koopman eigenfunctions are given by the Laplace averages \eqref{Lap_av_prolonged}.
}
{Figure \ref{fig:conefield_fx_pt} shows the contracting cone field for the
dynamics
\begin{equation}
\label{syst1}
\begin{array}{rcl}
\dot{x}_1 & = & -\sin(x_1) + \cos(x_2) -1  \vspace{1mm} \\
\dot{x}_2 & = & -\cos(x_1) - 1.5 \sin(x_2) +1
\end{array}
\end{equation}
which has a stable fixed point at the origin.} 

The Laplace averages $\tilde{g}^{av}_{\lambda_1}$ and $\tilde{g}^{av}_{\lambda_2}$ (where $\lambda_1=-1$ and $\lambda_2=-1.5$ are the eigenvalues of the Jacobian matrix $J$ at the origin) are computed with the observables $\tilde{g}(x,\delta x)=v_1^T \delta x$ and $\tilde{g}(x,\delta x)=v_2^T \delta x$, respectively, where $v_1$ and $v_2$ are the left eigenvectors of $J$. This choice ensures that each average $\tilde{g}^{av}_{\lambda_j}$ is finite (provided also that $2\lambda_1<\lambda_2<\lambda_1$, which is the case here) and nonzero. Note also that we have $\tilde{g}=\partial g$ with $g=v_j^T x$, as required by Lemma \ref{lem2}. The Perron-Frobenius vector field is given by \eqref{cond_PF_vec}. An illustration is in Figure \ref{fig:conefield_fx_pt}.

\begin{figure}[htbp]
\centering
\includegraphics[width=0.8\columnwidth]{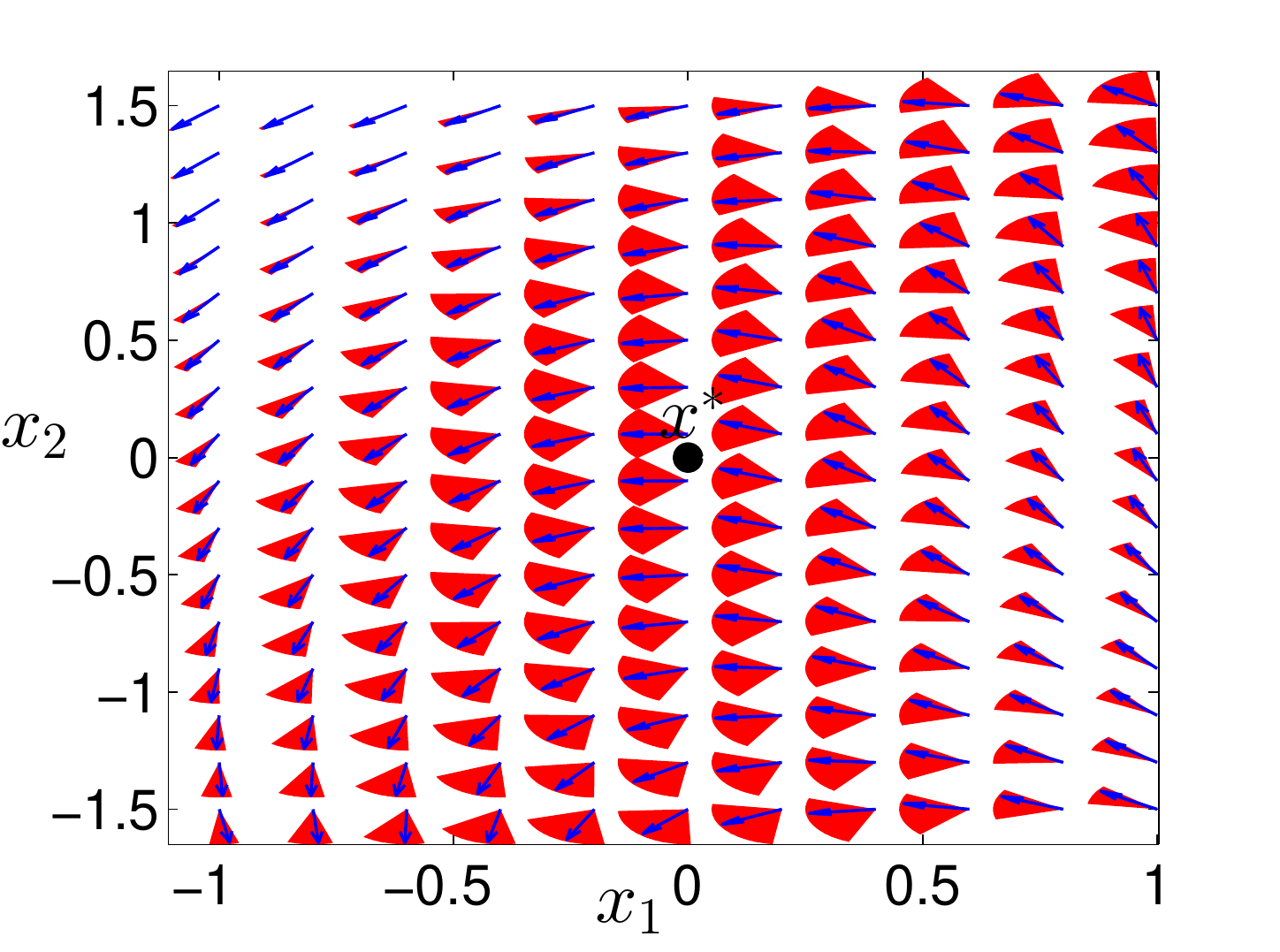} \vspace{-3mm}
\caption{Cone field (red) and Perron Frobenius vector field (blue) for \eqref{syst1}.}
\label{fig:conefield_fx_pt}
\end{figure}

{
\subsection{Limit cycles}
} 
When the system admits a stable hyperbolic limit cycle, a contracting cone field is defined by \eqref{cone_phi2} and can be expressed in terms of Koopman eigenfunctions associated with the system $\Sigma$ and the prolonged system $\delta \Sigma$. The Laplace averages \eqref{Lap_av} and \eqref{Lap_av_prolonged} can be used to compute the cone field.

{Figure \ref{fig:conefield_vdp} shows
the contracting cone field for the Van der Pol dynamics
\begin{equation}
\label{syst2}
\begin{array}{rcl}
\dot{x}_1 & = &  x_2  \\
\dot{x}_2 & = & (1-x_1^2) x_2 - x_1
\end{array}
\end{equation}
which has a stable limit cycle $\Gamma$ (of period $T>0$)}. 

The Laplace averages $g^{av}_{\lambda_1}$ and $\tilde{g}^{av}_{\lambda_1}$ (with $\lambda_1=i 2\pi/T$) are computed with the observables $g(x)=[\,1 \ 0\,]^T x$ and $\tilde{g}(x,\delta x)=\partial g(x)\delta x=[\,1 \ 0\,]^T \delta x$, respectively. According to \eqref{partial_angle}, we have $\partial \angle \phi_{\lambda_1}=\tilde{g}^{av}_{\lambda_1}/(i g^{av}_{\lambda_1})$.
The average $\tilde{g}^{av}_{\lambda_2}$ (where $\lambda_2$ is the nonzero Floquet exponent of the limit cycle) is computed with the observable $\tilde{g}(x,\delta x)=\xi(\rho(x))^T \delta x$, where $\xi:\Gamma \to T\mathcal{X}$ is a unit vector field such that $\xi(x)$ is perpendicular to the tangent direction to the limit cycle at $x$, and where $\rho:\mathcal{X} \to \Gamma$ is a radial projection on $\Gamma$ (i.e. $\rho(x)$ is the intersection between $\Gamma$ and the line passing through $x$ and the origin). Note that, as required by Lemma \ref{lem2}, $\tilde{g}$ is the differential of an observable measuring a distance to the limit cycle. The Perron-Frobenius vector field is given by \eqref{cond_PF_vec}.
{An illustration is in Figure \ref{fig:conefield_vdp}.}

\begin{figure}[htbp]
\centering
\includegraphics[width=0.8\columnwidth]{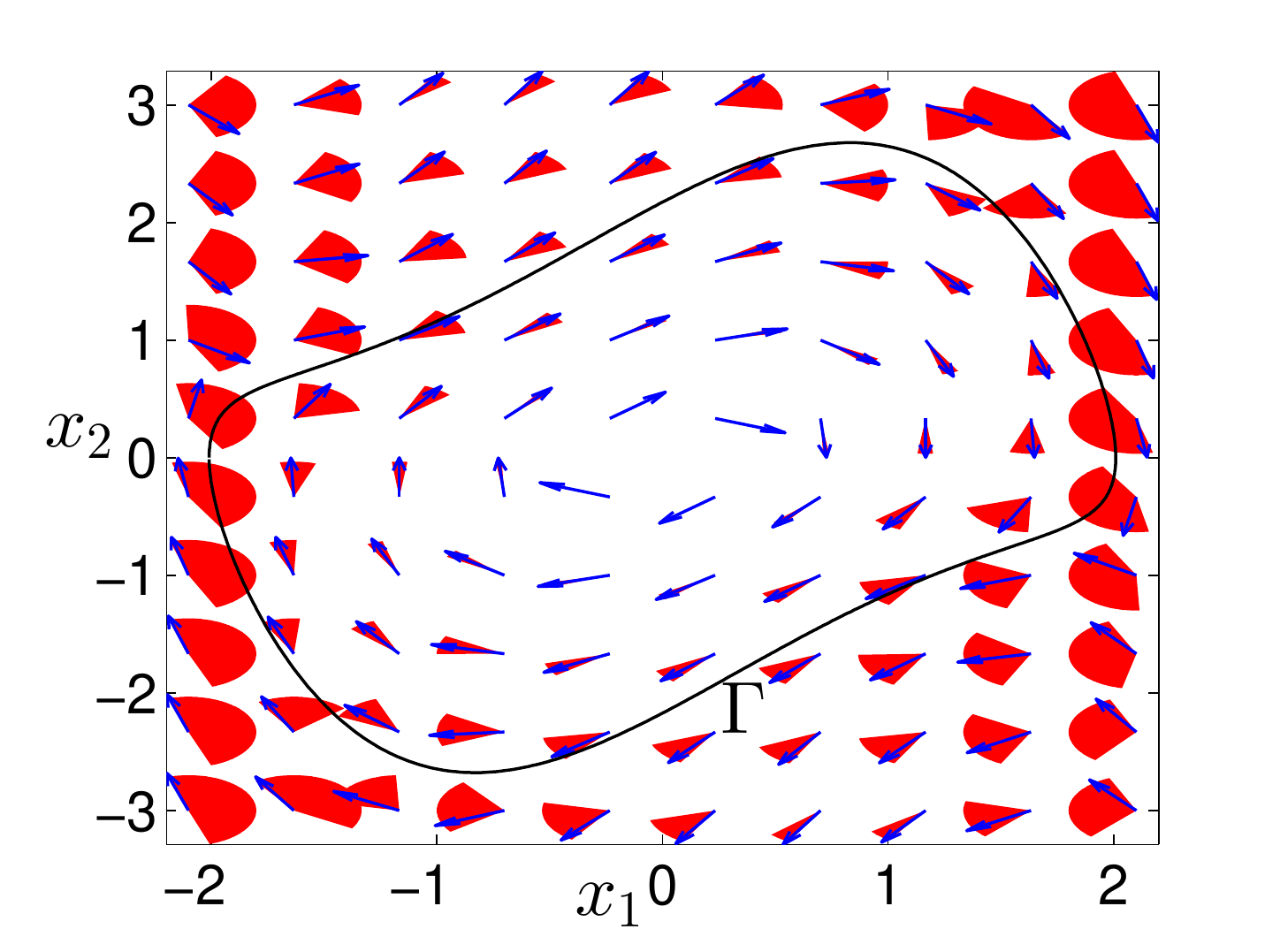} \vspace{-3mm}
\caption{Cone field (red) and Perron Frobenius vector field (blue) for \eqref{syst2}.}
\label{fig:conefield_vdp}
\end{figure}

{
\begin{remark}
Besides the Laplace averages, Koopman eigenfunctions can be computed
by other methods, which also provide novel ways to derive
cone fields.  For example, the Koopman operator can be expanded on a (finite) polynomial basis \cite{Mauroy_Mezic_stability}, yielding a polynomial approximation for
eigenfunctions and cone fields. In a similar way, 
the extended dynamic mode decomposition \cite{Rowley_EDMD} could also be 
employed.
\end{remark}
}

\section{Conclusion}
\label{conclu}
{\color{black}
Reducing the analysis of a nonlinear finite-dimensional system 
to the analysis of an infinite-dimensional linear system,
Koopman operator theory 
is a powerful tool of nonlinear control.
In this paper we bridged Koopman operator theory and
differential positivity, opening the way to the use
of spectral methods for differential positivity.
We illustrated a tight relation between Koopman eigenfunctions
and cone fields, leading to converse results for differential positivity
and to numerical tools for the construction of cone fields.

The bridge between Koopman theory and
differential positivity opens new interesting directions
of research. For example, we believe that the approach 
pursued in the paper will lead to  
converse results for the larger class of normally
hyperbolic one-dimensional attractors. 
This will be the object of future research.
}

\section{Acknowledgments}

A. Mauroy holds a BELSPO Return Grant and F. Forni holds a FNRS fellowship. This paper presents research results of the Belgian Network DYSCO, funded by the Interuniversity Attraction Poles Programme initiated by the Belgian Science Policy Office.

\appendix

\begin{proofof}\emph{Proposition \ref{prop_gen}}.

\emph{Case $\lambda_1 \in \mathbb{R}$.}
We first show that \eqref{cone_phi}
is a well-defined cone field. (i) $\mathcal{K}(x)+\mathcal{K}(x) \subseteq \mathcal{K}(x)$ since
$\partial \phi_{\lambda_1}(x)[\delta x + \delta x'] - |\partial \phi_{\lambda_j}(x)[\delta x +\delta x'] | \geq \partial \phi_{\lambda_1}(x)\delta x + \partial \phi_{\lambda_1}(x)\delta x' - |\partial \phi_{\lambda_j}(x)\delta x | - |\partial \phi_{\lambda_j}(x)\delta x' | \geq 0$ if $\delta x,\delta x'\in \mathcal{K}(x)$. (ii) $\alpha \mathcal{K}(x) \subseteq \mathcal{K}(x)$, $\alpha>0$ since $\partial \phi_{\lambda_1}(x)[\alpha \delta x] - |\partial \phi_{\lambda_j}(x)[\alpha \delta x] | \geq 0$ if $\delta x\in \mathcal{K}(x)$. (iii) $\mathcal{K}(x) \cap -\mathcal{K}(x) = \{0\}$ since $\partial \phi_{\lambda_1}(x)\delta x - |\partial \phi_{\lambda_j}(x)\delta x | \geq 0$ and $-\partial \phi_{\lambda_1}(x)\delta x - |\partial \phi_{\lambda_j}(x)\delta x | \geq 0$ imply $\partial \phi_{\lambda_j}(x)\delta x=0$ $\forall j$, so that $\delta x=0$ since the linear map $\partial \Phi(x)$ is injective.

Using the Koopman eigenfunctions of the prolonged system $\delta \Sigma$ and Lemma \ref{lemma}, we have
\begin{equation}
\begin{split}
\label{inequa3}
& \partial \phi_{\lambda_1}(\psi^t(x),\partial \psi^t(x)\delta x) - |\partial \phi_{\lambda_j}(\psi^t(x),\partial \psi^t(x)\delta x) | \\
& = \tilde{U}^t \tilde{\phi}^{(2)}_{\lambda_1} (x,\delta x) - |\tilde{U}^t \tilde{\phi}^{(2)}_{\lambda_j} (x,\delta x) | \\
& = e^{\lambda_1 t} \left(\tilde{\phi}^{(2)}_{\lambda_1}(x,\delta x) - e^{(\Re\{\lambda_j\}-\lambda_1)t} | \tilde{\phi}^{(2)}_{\lambda_j}(x,\delta x) | \right) \\
& \geq e^{\lambda_1 t} \left(\partial \phi_{\lambda_1}(x) \delta x - | \partial \phi_{\lambda_j}(x) \delta x | \right) 
\\
& \geq 0
\end{split}
\end{equation}
for all $x\in \mathcal{X}$ and $\delta x \in \mathcal{K}(x)$ and for all $t>0$. It follows that $\partial \psi^t(x) \mathcal{K}(x) \subseteq \mathcal{K}(\psi^t(x))$. If $\Re\{\lambda_1\} > \Re\{\lambda_2\}$, \eqref{inequa3} is a strict inequality and $\Sigma$ is strictly differentially positive (uniformly with an arbitrary $T>0$).

\emph{Case $\lambda_1 \in i\mathbb{R}$.} We consider the cone field \eqref{cone_phi2}.
It follows on similar lines that (i) $\mathcal{K}(x)+\mathcal{K}(x) \subseteq \mathcal{K}(x)$ and (ii) $\alpha \mathcal{K}(x) \subseteq \mathcal{K}(x)$, $\alpha>0$. In addition, $\partial \angle \phi_{\lambda_1}(x)\delta x - |\partial \phi_{\lambda_j}(x)\delta x | \geq 0$ and $-\partial \angle \phi_{\lambda_1}(x)\delta x - |\partial \phi_{\lambda_j}(x)\delta x | \geq 0$ imply $\partial \angle \phi_{\lambda_1}(x)\delta x=\partial \phi_{\lambda_j}(x)\delta x=0$ $\forall j \geq 2$. Since $|\phi_{\lambda_1}|$ is constant by assumption, we have
\begin{equation}
\label{property_angle}
\partial \phi_{\lambda_1}  = \partial \left(|\phi_{\lambda_1}| e^{i \angle \phi_{\lambda_1}} \right) = i \phi_{\lambda_1} \partial \angle \phi_{\lambda_1}
\end{equation}
and it follows that $\partial \phi_{\lambda_1}(x)\delta x=0$. Then the injectivity of $\partial \Phi(x)$ implies $\delta x=0$ , so that $\mathcal{K}(x) \cap -\mathcal{K}(x) = \{0\}$.

Since $|\phi_{\lambda_1}|$ must be nonzero, it follows from \eqref{property_angle} that
\begin{equation}
\label{partial_angle}
\partial \angle \phi_{\lambda_1}(x) \delta x = \frac{\partial \phi_{\lambda_1}(x) \delta x}{i \phi_{\lambda_1}(x)} = \frac{\tilde{\phi}^{(2)}_{\lambda_1}(x,\delta x)}{i \phi_{\lambda_1}(x)}\,.
\end{equation}
Using Lemma \ref{lemma} and $\Re\{\lambda_j\}\leq \Re\{\lambda_1\}=0$, we have
\begin{equation*}
\begin{split}
& \partial \angle \phi_{\lambda_1}(\psi^t(x))[\partial \psi^t(x)\delta x] - |\partial \phi_{\lambda_j}(\psi^t(x))[\partial \psi^t(x)\delta x] | \\
& = \frac{\tilde{U}^t \tilde{\phi}^{(2)}_{\lambda_1}(x,\delta x)}{i U^t \phi_{\lambda_1}(x)} - |\tilde{U}^t \tilde{\phi}^{(2)}_{\lambda_j} (x,\delta x) | \\
& = \frac{\tilde{\phi}^{(2)}_{\lambda_1}(x,\delta x)}{i \phi_{\lambda_1}(x)} - e^{\Re\{\lambda_j\} t} | \tilde{\phi}^{(2)}_{\lambda_j}(x,\delta x) | \\
& \geq \partial \angle \phi_{\lambda_1}(x)\delta x - | \partial \phi_{\lambda_j}(x) \delta x | \\
& \geq 0
\end{split}
\end{equation*}
for all $x\in \mathcal{X}$ and $\delta x \in \mathcal{K}(x)$ and for all $t>0$. It follows that $\partial \psi^t(x) \mathcal{K}(x) \subseteq \mathcal{K}(\psi^t(x))$. The proof for strictly differential positivity follows on similar lines.
\end{proofof} \vspace{3mm}

\begin{proofof}\emph{Proposition \ref{prop_PF_vec}.}

Consider $\alpha \in \mathbb{R}$ such that $|\partial \phi_{\lambda_1}(x)[\alpha \mathbf{w}(x)]|=1$. If $\lambda_1 \in \mathbb{R}$, this implies that $\partial \phi_{\lambda_1}(x)[\alpha \mathbf{w}(x)]$ has a unique value up to a sign. If $\lambda_1 \in i\mathbb{R}$,  $|\phi_{\lambda_1}|$ is constant and \eqref{property_angle} implies that $\partial \phi_{\lambda_1}(x)[\alpha \mathbf{w}(x)]$ has a unique value up to a sign. Since $\partial \Phi$ is injective, it follows that $\mathbf{w}(x)$ is unique (up to a sign).

Next, we show that the Perron-Frobenius vector field \eqref{def_PF_vec} satisfies \eqref{cond_PF_vec}. We have
\begin{equation}
\label{dev_PF}
\begin{split}
|\partial & \phi_{\lambda_j}(x)[\partial \psi^t(\psi^{-t}(x)) \delta x] | \\
& = |\partial \phi_{\lambda_j}(\psi^t(\psi^{-t}(x)))[\partial \psi^t(\psi^{-t}(x)) \delta x] | \\
& = |\tilde{U}^t \tilde{\phi}^{(2)}_{\lambda_j} (\psi^{-t}(x), \delta x)|\\
& = | e^{\lambda_j t} \tilde{\phi}^{(2)}_{\lambda_j} (\psi^{-t}(x), \delta x) | \\
&= e^{\Re\{\lambda_j\} t} |\partial \phi_{\lambda_j}(\psi^{-t}(x)) \delta x |
\end{split}
\end{equation}
where we used $x=\psi^t(\psi^{-t}(x))$ and Lemma \ref{lemma}. If $\lambda_1 \in \mathbb{R}$ and $\delta x \in \mathcal{K}(\psi^{-t}(x))$, it follows from \eqref{dev_PF} that
\begin{equation*}
\begin{split}
|\partial & \phi_{\lambda_j}(x)[\partial \psi^t(\psi^{-t}(x)) \delta x] | \\
& \leq e^{\Re\{\lambda_j\} t} \partial \phi_{\lambda_1}(\psi^{-t}(x)) \delta x \\
& = e^{\Re\{\lambda_j\} t} \tilde{\phi}_{\lambda_1}^{(2)}(\psi^{-t}(x),\delta x) \\
& = e^{(\Re\{\lambda_j\}-\lambda_1) t} \tilde{U}^t \tilde{\phi}_{\lambda_1}^{(2)}(\psi^{-t}(x),\delta x) \\
& = e^{(\Re\{\lambda_j\}-\lambda_1) t} \partial \phi_{\lambda_1}(x) [\partial \psi^t(\psi^{-t}(x)) \delta x] \\
& \leq e^{(\Re\{\lambda_j\}-\lambda_1) t} \|\partial \phi_{\lambda_1}(x)\| |\partial \psi^t(\psi^{-t}(x)) \delta x|
\end{split}
\end{equation*}
with $j \geq 2$ and with $\|\partial \phi_{\lambda_1}(x)\| = \max_{|\delta x|=1} |\partial \phi_{\lambda_1}(x) \delta x|$. In the case $\lambda_1 \in i\mathbb{R}$, it follows on similar lines that
\begin{equation*}
\begin{split}
|\partial \phi_{\lambda_j}(x) & [\partial \psi^t(\psi^{-t}(x)) \delta x] | \\
&  \leq e^{\Re\{\lambda_j\} t} \|\partial \angle \phi_{\lambda_1}(x)\| |\partial \psi^t(\psi^{-t}(x)) \delta x|\,.
\end{split}
\end{equation*}
Assuming without loss of generality that $|\phi_{\lambda_1}(x)|=1$ when $\lambda_1 \in i\mathbb{R}$, \eqref{property_angle} implies that $\|\partial \angle \phi_{\lambda_1}\|=\|\partial \phi_{\lambda_1}\|$. Then, in both cases $\lambda_1 \in \mathbb{R}$ and $\lambda_1 \in i\mathbb{R}$, we have
\begin{equation*}
\frac{|\partial \phi_{\lambda_j}(x)[\partial \psi^t(\psi^{-t}(x)) \delta x] |}{|\partial \psi^t(\psi^{-t}(x)) \delta x|} \leq e^{(\Re\{\lambda_j\}-\Re\{\lambda_1\}) t} \|\partial \phi_{\lambda_1}(x)\|\,.
\end{equation*}
Finally, taking the limit $t\rightarrow \infty$ and using the definition \eqref{def_PF_vec}, we obtain $|\partial \phi_{\lambda_j}(x) [\mathbf{w}(x)]|=0$ since $\Re\{\lambda_j\}<\Re\{\lambda_1\}$. This concludes the proof.
\end{proofof} \vspace{3mm}

\begin{proofof}\emph{Proposition \ref{prop_fx_pt}}

\emph{Sufficiency.} It is known that the system admits $n$ independent Koopman eigenfunctions $\phi_{\lambda_j} \in C^1(\mathcal{B}(x^*))$ associated with the eigenvalues $\lambda_j$ of $J$. This result follows from Theorem 2.3 in \cite{Lan}, which shows the existence of a $C^1$ diffeomorphism $h:\mathcal{B}(x^*) \rightarrow \mathbb{R}^n$ such that $y=h(x)$ and
$\dot{y}=J\,y$. Letting $\phi_{\lambda_j}(x)=v_j^T h(x)$ where $v_j$ is the left eigenvector of $J$ associated with $\lambda_j$, we verify that
\begin{equation*}
\partial \phi_{\lambda_j}(x)[f(x)] = v_j^T \partial h(x)[f(x)] = v_j^T J y = \lambda_j v_j^T y = \lambda_j \phi_{\lambda_j}
\end{equation*}
and \eqref{PDE_Koop} implies that $\phi_{\lambda_j}$ is a Koopman eigenfunction. In addition, since $h$ is a diffeomorphism, $\partial h$ is injective and the linear map $\partial \Phi(x)=(v_1^T \partial h(x),\dots,v_n^T \partial h(x))$ is injective since the eigenvectors $v_j$ are independent. Then the result follows from Proposition \ref{prop_gen} with $\mathcal{X}=\mathcal{B}(x^*)$ and $\lambda_1 \in \mathbb{R}$.

\emph{Necessity.} If the system is (strictly) differentially positive, the differential dynamics $\dot{\delta x}=J\, \delta x$ at the fixed point is (strictly) differentially positive. Then the invariant cone $\mathcal{K}(x^*)$ must contain the dominant direction, which corresponds to the right eigenvector $v$ of $J$ associated with $\lambda_1$. If $\lambda_1 \notin \mathbb{R}$, it is clear that the invariance of the cone field implies that $\mathcal{K}(x^*)$ contains the entire two-dimensional plane spanned by $\Re\{v\}$ and $\Im\{v\}$. Thus we have $\mathcal{K}(x^*) \cap -\mathcal{K}(x^*) \neq \{0\}$ and $\mathcal{K}(x^*)$ is not a pointed cone. This is a contradiction, so that $\lambda_1 \in \mathbb{R}$.
\end{proofof} \vspace{3mm}

\begin{proofof}\emph{Proposition \ref{prop_lim_cyc}.}

It is known that the system admits $n$ independent Koopman eigenfunctions $\phi_{\lambda_j} \in C^1(\mathcal{B}(x^*))$. This result follows from Theorem 2.6 in \cite{Lan} (and from Floquet theory), which shows the existence of a $C^1$ diffeomorphism $h=(h_y,h_\theta):\mathcal{B}(\Gamma) \rightarrow \mathbb{R}^{n-1} \times \mathbb{S}^1$ such that $(y,\theta)=(h_y(x),h_\theta(x))$ and $\dot{y} = B\,y$, $\dot{\theta} = \omega$, where $\omega=2\pi/T$ (with $T$ the period of the limit cycle). It is clear that $\phi_{\lambda_1}(x)=e^{i h_\theta(x)}$ is a Koopman eigenfunction associated with the eigenvalue $\lambda_1=i \omega$. In addition, the matrix $B$ has $n-1$ eigenvalues $\lambda_j$, $j=2,\dots,n$ (i.e. the nonzero Floquet exponents of the limit cycle) associated with the left eigenvectors $v_j$. Since the limit cycle is hyperbolic and stable, we have $\Re\{\lambda_j\}<0$ for all $j \geq 2$. As in the proof of Proposition \ref{prop_fx_pt}, it follows that $\phi_{\lambda_j}(x)=v_j^T h_y(x)$ are Koopman eigenfunctions associated with the eigenvalues $\lambda_j$. In addition, since $h$ is a diffeomorphism, $\partial h$ is injective and the linear map $\partial \Phi(x)=(i e^{i h_\theta(x)} \partial h_\theta(x),v_2^T \partial h_y(x),\dots,v_n^T \partial h_y(x))$ is injective since the eigenvectors $v_j$ are independent.  Finally, $\angle \phi_{\lambda_1}=h_\theta \in C^1$ and $|\phi_{\lambda_1}|=1$. Then the result follows from Proposition \ref{prop_gen} with $\mathcal{X}=\mathcal{B}(x^*)$ and $\lambda_1 \in i\mathbb{R}$.
\end{proofof}



\bibliographystyle{plain}

\end{document}